\newtheorem{theorem}{Theorem}
\newtheorem{corollary}[theorem]{Corollary}
\newtheorem{lemma}[theorem]{Lemma}
\newtheorem{proposition}[theorem]{Proposition}
\newenvironment{proof}[1][Proof]{\textbf{#1.} }{\ \rule{0.5em}{0.5em}}
\title{Some Upper Bounds on \\ Ramsey Numbers Involving $C_4$}
\author{Luis Boza\\
            {\small Departamento de Matem\'atica Aplicada I, Universidad de Sevilla, boza@us.es}\\
	\and
          Stanis\l{}aw Radziszowski\\
          {\small Department of Computer Science, Rochester Institute of Technology, spr@cs.rit.edu}}
\begin{document}
\maketitle

\begin{abstract}
We obtain some new upper bounds on the Ramsey numbers of the form
$R(\underbrace{C_4,\ldots,C_4}_m,G_1,\ldots,G_n)$,
where $m\ge 1$ and $G_1,\ldots,G_n$ are arbitrary graphs.
We focus on the cases of $G_i$'s being complete, star $K_{1,k}$
or book graphs $B_k$, where $B_k=K_2+kK_1$. If $k\ge 2$, then our main upper bound theorem implies that
$$R(C_4,B_k) \le R(C_4,K_{1,k})+\left\lceil\sqrt{R(C_4,K_{1,k})}\right\rceil+1.$$

Our techniques are used to obtain new upper bounds in several
concrete cases, including:
$R(C_4,K_{11})\leq 43$, $R(C_4,K_{12})\leq 51$, $R(C_4,K_3,K_4)\leq 29$, $R(C_4,K_4,K_4)\leq 66$, $R(C_4,K_3,K_3,K_3)\leq 57$, $R(C_4,C_4,K_3,K_4)\leq 75$, and $R(C_4,C_4,K_4,K_4)\leq 177$, and also $R(C_4,B_{17})\leq 28$.

\end{abstract}

\section{Introduction}

For $n$ given graphs $H_1, H_2, \ldots, H_n$, the Ramsey number $R(H_1,H_2,\ldots,H_n)$ is the smallest integer $R$ such that if we arbitrarily color the edges of a complete graph of order $R$ with $n$ colors, then it contains a monochromatic copy of $H_i$ in color $i$, for some $1\le i\le n$.

We will use the following notation from \cite{R}: $K_k$ is a complete graph on $k$ vertices, the graph $kG$ is formed by $k$ disjoint copies of $G$, $G\cup H$ stands for vertex disjoint union of graphs, and the join graph $G+H$ is obtained by adding all of the edges between vertices of $G$ and $H$ to $G\cup H$. $C_k$ is a cycle on $k$ vertices, $P_k$ is a path on $k$ vertices, $K_{1,k}=K_1+kK_1$ is a star on $k+1$ vertices, and $B_k=K_2+kK_1$ is a book on $k+2$ vertices.

An $(H_1,\ldots,H_n)$-coloring of the edges of $K_N$ is a coloring using $n$ colors, such that it does not contain any monochromatic copy of $H_i$ in color $i$, for any $i$, $1 \le i \le n$. Note that if such coloring exists, then $N<R(H_1,\ldots,H_n)$. In the case of 2 colors, we will interpret graphs $G$ as colorings in which the edges of $G$ are assigned the first color, and the nonedges are assigned the second color.

Let $G$ be a graph or a coloring of edges, and let $V(G)$ denote the vertex set of $G$. For $v\in V(G)$, $G-v$ is the graph or the coloring induced by $V(G)\setminus\{v\}$. If $G$ is a coloring using $n$ colors and $v\in V(G)$, then $d_i(v)$ is the number of edges in color $i$ incident to $v$ in $G$. If $G$ is an $(H_1,\ldots,H_n)$-coloring, $1\le i\le n$, $v\in
 V(G)$ and $u_i\in V(H_i)$, then an elementary property of Ramsey colorings implies that $d_i(v)\le R(H_1,\ldots,H_{i-1},H_i-u_i,H_{i+1},\ldots,H_n)-1$. Numerous results on 2-color and multicolor Ramsey numbers involving $C_4$ are summarized in the dynamic survey \cite{R}, mainly in sections 3.3 (note that $C_4=K_{2,2}$), 4, and 6.[4,5,6,7].

The main goal of this paper is the derivation of some new upper bounds on the Ramsey numbers of the form
$R(\underbrace{C_4,\ldots,C_4}_m,G_1,\ldots,G_n)$,
where $m\ge 1$ and $G_1,\ldots,G_n$ are arbitrary graphs. The main result,
Theorem \ref{MT}, is obtained in Section 2. Then, in Sections 3 and 4 we
focus on the cases of $G_i$'s being complete, star or book graphs. Also in these sections several new concrete upper bounds are presented.

\section{Main result}

The main objective of this section is to obtain Theorem \ref{MT} claiming a new upper bound on the Ramsey numbers of the form $R(\underbrace{C_4,\ldots,C_4}_m,G_1,\ldots,G_n)$, with only relatively mild technical constraints. We need some auxilliary results, which will be presented first.

\begin{lemma} Sedrakyan's inequality \cite{Eng}.

For any real numbers $a_1,\ldots,a_m$ and positive real numbers $b_1,\ldots,b_m$, we have

$$\sum_{k=1}^m \dfrac{a_k^2}{b_k^2}\ge \left(\dfrac{\sum_{k=1}^m a_k}{\sum_{k=1}^m b_k}\right)^2.$$
\end{lemma}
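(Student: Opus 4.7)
The plan is to derive this as a routine consequence of the classical Cauchy--Schwarz inequality. Since every $b_k$ is strictly positive, I would apply Cauchy--Schwarz to the pair of vectors $(a_k/b_k)_{k=1}^m$ and $(b_k)_{k=1}^m$. The inner product collapses telescopically, since $(a_k/b_k)\cdot b_k = a_k$, so Cauchy--Schwarz yields
$$\left(\sum_{k=1}^m a_k\right)^{2} \;\le\; \left(\sum_{k=1}^m \frac{a_k^2}{b_k^2}\right)\left(\sum_{k=1}^m b_k^2\right),$$
which rearranges directly to $\sum_k a_k^2/b_k^2 \ge (\sum_k a_k)^2 / \sum_k b_k^2$. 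This is already the standard Engel/Titu form of the inequality.

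The second step is then to replace $\sum_k b_k^2$ in the denominator by the weaker quantity $(\sum_k b_k)^2$ that appears in the stated bound. Expanding $(\sum_k b_k)^2 = \sum_k b_k^2 + 2\sum_{i<j} b_i b_j$ and using that all $b_k>0$ (hence every cross term is nonnegative) gives $\sum_k b_k^2 \le (\sum_k b_k)^2$, so $1/\sum_k b_k^2 \ge 1/(\sum_k b_k)^2$. Chaining this with the inequality from the first step produces the bound claimed in the lemma.

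I do not expect any real obstacle here, as both ingredients are elementary. The only point worth flagging is that the form stated in the lemma is slightly weaker than the sharper Engel/Titu inequality $\sum a_k^2/b_k \ge (\sum a_k)^2/\sum b_k$ (note the single powers of $b_k$); the passage from $\sum b_k^2$ to $(\sum b_k)^2$ in the denominator is precisely where that weakening is absorbed. An equally clean alternative would be a short induction on $m$ built from the two-term case $a^2/b^2 + c^2/d^2 \ge (a+c)^2/(b+d)^2$, but the two-line Cauchy--Schwarz route above is the most economical.
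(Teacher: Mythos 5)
Your proof is correct as a proof of the literal statement. The paper itself offers no argument for this lemma --- it is stated with only a citation to \cite{Eng} --- so there is no in-paper proof to compare against; your two-step Cauchy--Schwarz derivation (first obtaining $\sum_k a_k^2/b_k^2 \ge (\sum_k a_k)^2/\sum_k b_k^2$, then relaxing the denominator to $(\sum_k b_k)^2$ using positivity of the $b_k$) is a complete and standard way to establish it. Your closing remark deserves emphasis: the inequality as printed is not Sedrakyan's (Engel form) inequality but a strictly weaker consequence of it, and the paper's own Corollary~\ref{se} does \emph{not} follow from the lemma as literally stated. Setting $b_k=1$ in the printed inequality yields only $\sum_k a_k^2 \ge (\sum_k a_k)^2/m^2$, whereas Corollary~\ref{se} asserts the denominator $m$; that corollary requires the genuine Engel form $\sum_k a_k^2/b_k \ge (\sum_k a_k)^2/\sum_k b_k$ (single powers of $b_k$), which is presumably what the citation to \cite{Eng} refers to and what the lemma was meant to say. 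So while your argument is valid for the statement you were given, the statement itself appears to be a misprint, and the version actually used downstream is the stronger one --- which a minor variant of your first Cauchy--Schwarz step already proves, namely applying Cauchy--Schwarz to the vectors $\left(a_k/\sqrt{b_k}\right)_k$ and $\left(\sqrt{b_k}\right)_k$.
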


Note that if $b_k=1$ for all $k$, $1 \le k \le m$, then Lemma 1 reduces to:

\begin{corollary} \label{se}
$\sum_{k=1}^m a_k^2\ge \dfrac{(\sum_{k=1}^m a_k)^2}{m}.$
\end{corollary}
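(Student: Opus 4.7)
The plan is to obtain Corollary \ref{se} as an immediate specialization of Lemma 1. Concretely, I would set $b_k = 1$ for every $k$ with $1 \le k \le m$ in the statement of Sedrakyan's inequality. Under this substitution each denominator $b_k^2$ on the left-hand side becomes $1$, so the sum collapses to $\sum_{k=1}^m a_k^2$; on the right-hand side, $\sum_{k=1}^m b_k = m$, and the squared ratio reduces to $(\sum_{k=1}^m a_k)^2 / m$, which is precisely the claimed bound. Because Lemma 1 has already been stated, the derivation fits in a single line and requires no auxiliary construction.

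A self-contained alternative, should one prefer not to invoke Lemma 1 at this point, is to apply the Cauchy--Schwarz inequality directly to the vectors $(a_1, \ldots, a_m)$ and $(1, \ldots, 1)$, which yields $(\sum_{k=1}^m a_k)^2 \le m \sum_{k=1}^m a_k^2$ after observing $\sum_{k=1}^m 1^2 = m$; rearranging gives the stated inequality. Either route is routine, and there is no substantive obstacle. The only point worth flagging is that the corollary is written in the asymmetric form (a lower bound on $\sum a_k^2$ in terms of $\sum a_k$) precisely because that is the shape of estimate needed later: when bounding quantities of the form $\sum_v d_i(v)^2$ from below in a $C_4$-free coloring one typically controls only the total $\sum_v d_i(v)$ (via handshake counting), so Corollary \ref{se} will convert that linear information into the quadratic lower bound needed for the cherry/path-of-length-two count that drives the proof of Theorem \ref{MT} in the next section.
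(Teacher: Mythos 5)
Your proposal is correct and matches the paper exactly: the paper obtains Corollary \ref{se} by the same one-line specialization $b_k=1$ in Lemma 1. The Cauchy--Schwarz alternative you mention is a fine equivalent, but no different argument is needed.
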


A simple argument, involving just the basic definition of Ramsey numbers, leads to the next lemma.
\begin{lemma} \label{l1}
$$R(P_3,H_1,\ldots,H_n)+1\le R(C_4,H_1\cup K_1,\ldots,H_n\cup K_1)=$$
$$\max\{R(C_4,H_1,\ldots,H_n),|V(H_1)|+1,\ldots,|V(H_n)|+1\}.$$
\end{lemma}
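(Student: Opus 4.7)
My plan is to handle the inequality and the equality separately, with the inequality being the more substantive half.

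For the inequality, I would take a $(P_3,H_1,\ldots,H_n)$-coloring $\chi$ of $K_M$ with $M=R(P_3,H_1,\ldots,H_n)-1$ and extend it to a $(C_4,H_1\cup K_1,\ldots,H_n\cup K_1)$-coloring of $K_{M+1}$. The key observation is that the absence of a monochromatic $P_3$ in color $0$ is equivalent to the color-$0$ edges of $\chi$ forming a matching in $K_M$. I would then adjoin a new vertex $v$ and paint every edge incident to $v$ in color $0$, leaving the other colors unchanged. A monochromatic $C_4$ in color $0$ of the extended coloring that avoids $v$ would lie inside the original matching, while a $C_4$ through $v$ would force two color-$0$ edges in $K_M$ to share an endpoint, contradicting the matching property; so color $0$ is $C_4$-free. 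For each color $i\ge 1$ the edges agree with those of $\chi$, so no $H_i$ and hence no $H_i\cup K_1$ appears. The resulting coloring on $M+1$ vertices gives $R(C_4,H_1\cup K_1,\ldots,H_n\cup K_1)\ge R(P_3,H_1,\ldots,H_n)+1$.

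For the equality, let $N=\max\{R(C_4,H_1,\ldots,H_n),|V(H_1)|+1,\ldots,|V(H_n)|+1\}$, and verify both directions. For the upper bound: any coloring of $K_N$ already contains a monochromatic $C_4$ or some $H_i$ on a vertex set $S$, and in the latter case $N\ge|S|+1$ supplies a vertex outside $S$ to serve as the $K_1$. For the lower bound: any $(C_4,H_1,\ldots,H_n)$-coloring is automatically a $(C_4,H_1\cup K_1,\ldots,H_n\cup K_1)$-coloring because every copy of $H_i\cup K_1$ contains a copy of $H_i$; and coloring $K_{|V(H_j)|}$ monochromatically in the color $j$ that maximizes $|V(H_j)|$ defeats every forbidden pattern for vertex-count reasons.

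I do not foresee a real obstacle; the only genuine content is the matching structure of color~$0$, which is what prevents the new vertex from creating a $C_4$, together with the elementary vertex-count promotion from $H_i$ to $H_i\cup K_1$. The remaining work is simply bookkeeping of off-by-one constants.
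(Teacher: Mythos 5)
Your proposal is correct and follows essentially the same route as the paper: the inequality is obtained by adjoining a vertex joined to everything in the first color (your matching observation just makes explicit why no $C_4$ arises), and the equality follows from the same vertex-count argument that a host graph on more than $|V(H_i)|$ vertices containing $H_i$ also contains $H_i\cup K_1$. You supply somewhat more detail than the paper (which leaves the verification of the extended coloring and the lower-bound construction implicit), but the underlying ideas are identical.
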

\begin{proof}
Let
$N=R(P_3,H_1,\ldots,H_n)-1$. Consider any $(P_3,H_1,\ldots,H_n)$-coloring of $K_N$. By adding a new vertex adjacent to all of $K_N$ and using the first color for the new edges, a $(C_4,H_1\cup K_1,\ldots,H_n\cup K_1)$-coloring of $K_{N+1}$ is obtained. Thus, $N+1<R(C_4,H_1\cup K_1,\ldots,H_n\cup K_1)$ and the first part of the lemma is obtained. Next, observe that any graph $G$ containing $H_n$ contains $H_n\cup K_1$ as well, if $|V(G)|>|V(H_n)|$. Thus, $R(F,H_1,\ldots,H_{n-1},H_n\cup K_1)=$ $\max\{R(F,H_1,\ldots,H_{n-1},H_n),$
$|V(H_n)|+1\}$. We complete the proof by using the same argument for all colors.
\end{proof}

\begin{lemma} \label{l2}
Let $m\geq 1$ and $n\geq 0$. Consider $n$ graphs, $G_1,\ldots,G_n$. For each color $i$ with $1\leq i\leq n$, let $G'_i=G_i-w_i$, where $w_i\in V(G_i)$, and
let $r_i$'s be integers such that $$r_i\ge R(P_3,\underbrace{C_4,\ldots,C_4}_{m-1},G_1,\ldots,G_{i-1},G'_i,G_{i+1},\ldots,G_n).$$
Let $R=R(P_3,\underbrace{C_4,\ldots,C_4}_{m-1},G_1,\ldots,G_n)$. Then, we have
$$R\le \sum_{i=1}^n r_i-n+3+\dfrac{m^2-m}{2}+\left\lfloor\sqrt{\dfrac{\left(m^2-m\right)^2}{4}+(m-1)^2\left(\sum_{i=1}^n r_i-n+1\right)}\right\rfloor.\hspace{.8cm}(1)$$
\end{lemma}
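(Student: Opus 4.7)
My plan is to fix $N = R-1$ and analyze a $(P_3,\underbrace{C_4,\ldots,C_4}_{m-1},G_1,\ldots,G_n)$-coloring of $K_N$, which exists by the definition of $R$. Labelling the colors $0$ (the $P_3$-color), $1,\ldots,m-1$ (the $C_4$-colors), and $m,\ldots,m+n-1$ (the $G_i$-colors), two pointwise degree bounds are immediate at every vertex $v$. Two edges in color $0$ at $v$ would form a $P_3$, hence $d_0(v) \le 1$; and the standard neighborhood-Ramsey argument recalled in the Introduction gives $d_{m-1+i}(v)\le r_i-1$, since the color-$(m-1+i)$ neighborhood of $v$ inherits a Ramsey coloring with $G_i$ replaced by $G_i' = G_i - w_i$. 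Writing $S = \sum_{i=1}^n r_i - n$ and $D(v) = \sum_{j=1}^{m-1} d_j(v)$, summing all degrees to $N-1$ then yields $D(v) \ge N-2-S$.

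For the $C_4$-colors only a collective constraint is available: in any single $C_4$-color $j$, no two vertices share more than one common $j$-colored neighbor, so the classical cherry count gives $\sum_v \binom{d_j(v)}{2} \le \binom{N}{2}$. Summing over $j$ provides the upper bound $\sum_{j,v} \binom{d_j(v)}{2} \le (m-1)\binom{N}{2}$. For a matching lower bound I would apply Corollary \ref{se} to $d_1(v),\ldots,d_{m-1}(v)$, obtaining $\sum_j d_j(v)^2 \ge D(v)^2/(m-1)$ and hence
$$\sum_{j=1}^{m-1} \binom{d_j(v)}{2} \ge \frac{D(v)(D(v)-m+1)}{2(m-1)}.$$
Assuming $N \ge S + m + 1$ (otherwise $R \le S+m+1 \le S+3+(m^2-m)/2$ and (1) holds trivially), the map $x \mapsto x(x-m+1)$ is non-decreasing at $x = D(v) \ge m-1$, so the bound $D(v) \ge N-2-S$ propagates to the uniform vertex-wise lower bound $(N-2-S)(N-S-m-1)/[2(m-1)]$.

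Summing over the $N$ vertices and comparing with the upper bound produces
$$(N-2-S)(N-S-m-1) \le (m-1)^2 (N-1).$$
Setting $K = N-2-S$ (so $N-1 = K+S+1$ and $N-S-m-1 = K-m+1$) reduces this to the quadratic inequality $K^2 - m(m-1)K - (m-1)^2 (S+1) \le 0$, and the quadratic formula gives $K \le (m^2-m)/2 + \sqrt{(m^2-m)^2/4 + (m-1)^2 (S+1)}$. Substituting $R = N+1 = K+S+3$ and invoking the integrality of $R$ to pass to the floor recovers (1). The main obstacle is choreographing the heterogeneous colour constraints so that Cauchy--Schwarz correctly redistributes the residual degree budget $N-2-S$ across the $C_4$-colors into a single clean quadratic; once the combination is set up, the remaining manipulation is routine algebra.
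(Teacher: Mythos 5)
Your proposal is correct and follows essentially the same route as the paper's proof: the pointwise degree bounds $d_0(v)\le 1$ and $d_{m-1+i}(v)\le r_i-1$, the cherry count $\sum_v \binom{d_j(v)}{2}\le\binom{N}{2}$ for each $C_4$-colour, Corollary \ref{se} to merge the $C_4$-colours into a single quadratic, and integrality at the end; the only cosmetic difference is that you exploit the uniform bound $D(v)\ge N-2-S$ at every vertex (justifying monotonicity via the case split $N\ge S+m+1$), whereas the paper evaluates the same quadratic at a vertex minimising the $C_4$-degree sum. One small repair: when $m=1$ your key display divides by $m-1=0$, so that case must be split off as the paper does --- but it is immediate from your own observation that $0=D(v)\ge N-2-S$, which forces $R\le S+3$, exactly the right-hand side of (1) for $m=1$.
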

\begin{proof}
Let $N=R-1$ and $G$ be a $(P_3,\underbrace{C_4,\ldots,C_4}_{m-1},G_1,\ldots,G_n)$-coloring of the edges of $K_N$. Let $v_0\in V(G)$ such that
$\sum_{i=2}^md_i(v_0)=\min_{v\in V(G)}\{\sum_{i=2}^m d_i(v)\}$.

In order to avoid a $P_3$ of the first color, we have $d_1(v_0)\leq 1$. If $1\leq i\leq n$, in order to prevent a $G_i$ of color $i+m$, we need $d_{i+m}(v_0)\leq r_i-1$. Hence, we arrive at the relation
$$N=1+\sum_{i=1}^{m+n} d_i(v_0)\le 2+\sum_{i=2}^m d_i(v_0)+\sum_{i=1}^n (r_i-1)=2-n+\sum_{i=2}^{m} d_i(v_0)+\sum_{i=1}^n r_i.\mbox{ \hspace{.5cm} } (2)$$

If $m=1$, then $R=N+1\leq 3-n+\sum_{i=1}^n r_i$, and the result is obtained.

Now, let us assume that $m\geq 2$.
For each color $i\in\{2,\ldots,m\}$, since there is no $C_4$ of color $i$, for any pair of vertices $u,v\in V(G)$ there is at most one vertex connected to both $u$ and $v$ by edges of color $i$. Therefore,
$\sum_{v\in V(G)}\left(\begin{array}{c}d_i(v)\\2\end{array}\right)\le\left(\begin{array}{c}N\\2\end{array}\right)$, and
$$\sum_{v\in V(G)}\left(\sum_{i=2}^m d_i(v)^2-\sum_{i=2}^m d_i(v)\right)=\sum_{i=2}^m\sum_{v\in V(G)}d_i(v)(d_i(v)-1)\leq (m-1)N(N-1).$$
Then, by Corollary \ref{se}, for any $v\in V(G)$ we have $\sum_{i=2}^m d_i(v)^2\ge\dfrac{(\sum_{i=2}^m d_i(v))^2}{m-1}$,

\noindent
and thus
$$(m-1)N(N-1)\ge \sum_{v\in V(G)}\left(\sum_{i=2}^m d_i(v)^2-\sum_{i=2}^m d_i(v)\right)\ge$$
$$\sum_{v\in V(G)}\left(\dfrac{(\sum_{i=2}^m d_i(v))^2}{m-1}-\sum_{i=2}^m d_i(v)\right)=\sum_{v\in V(G)}\left(\sum_{i=2}^m d_i(v)\right)\left(\dfrac{\sum_{i=2}^m d_i(v)}{m-1}-1\right)\ge$$
$$N\sum_{i=2}^m d_i(v_0)\left(\dfrac{\sum_{i=2}^m d_i(v_0)}{m-1}-1\right)=N\left(\dfrac{(\sum_{i=2}^m d_i(v_0))^2}{m-1}-\sum_{i=2}^m d_i(v_0)\right).$$

\noindent
Hence, using (2), we obtain
$$\dfrac{(\sum_{i=2}^m d_i(v_0))^2}{m-1}-\sum_{i=2}^m d_i(v_0)\leq (m-1)(N-1)\le$$

$$(m-1)\left(1-n+\sum_{i=2}^{m} d_i(v_0)+\sum_{i=1}^n r_i\right)$$

\noindent
and
$$(\sum_{i=2}^m d_i(v_0))^2-(m-1)\sum_{i=2}^m d_i(v_0)\le(m-1)^2\left(1-n+\sum_{i=2}^{m} d_i(v_0)+\sum_{i=1}^n r_i\right),$$

\noindent
which implies
$$\left(\sum_{i=2}^m d_i(v_0)\right)^2-m(m-1)\sum_{i=2}^m d_i(v_0)-(m-1)^2\left(1-n+\sum_{i=1}^n r_i\right)\le 0.$$

Consequently, seeing the latter as a quadratic in $\sum_{i=2}^m d_i(v_0)$, we have that

$$\sum_{i=2}^m d_i(v_0)\le \dfrac{m^2-m}{2}+\sqrt{\dfrac{\left(m^2-m\right)^2}{4}+(m-1)^2\left(\sum_{i=1}^n r_i-n+1\right)}.$$

Thus, by (2),
$$R\le \sum_{i=1}^n r_i-n+3+\dfrac{m^2-m}{2}+\sqrt{\dfrac{\left(m^2-m\right)^2}{4}+(m-1)^2\left(\sum_{i=1}^n r_i-n+1\right)}.$$

Since $R$ is an integer, the result is obtained.
\end{proof}

Using Lemmas \ref{l1} and \ref{l2}, we obtain the next (and last) lemma.

\begin{lemma} \label{P3}
Let $m\ge 1$ and $n\ge 0$. Consider any graphs $G_1,\ldots,G_n$. For each color $i$, $1\leq i\leq n$, let $G'_i=G_i-w_i$, where $w_i\in V(G_i)$, and let $r_i$'s be integers such that
$$r_i\geq R(\underbrace{C_4,\ldots,C_4}_m,G_1,\ldots,G_{i-1},G'_i,G_{i+1},\ldots,G_n).$$
Assume further that
$R(\underbrace{C_4,\ldots,C_4}_m,G_1,\ldots,G_n)>\max_{1\le i\le n}\{|V(G_i)|\}$, and also
that when $m=1$ then $G_i\ne K_2$ for some $i\in\{1,\ldots,n\}$.
Then we have
$$R(P_3,\underbrace{C_4,\ldots,C_4}_{m-1},G_1,\ldots,G_n)\leq\sum_{i=1}^n r_i-n+\frac{m^2+m}{2}+\left\lceil m\sqrt{\frac{(m+1)^2}{4}+\sum_{i=1}^n r_i-n}\right\rceil.\hspace{.3cm}(3)$$
\end{lemma}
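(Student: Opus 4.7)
The plan is to combine Lemmas~\ref{l1} and~\ref{l2}: Lemma~\ref{l2} expects inputs bounding $P_3$-type Ramsey numbers, whereas the given $r_i$'s bound $C_4$-type Ramsey numbers, and Lemma~\ref{l1} supplies the conversion at the cost of a unit. Specifically, for each fixed $i$, applying Lemma~\ref{l1} to the list $(C_4,\ldots,C_4,G_1,\ldots,G'_i,\ldots,G_n)$ with $m-1$ copies of $C_4$ yields
$$R(P_3,C_4^{m-1},G_1,\ldots,G'_i,\ldots,G_n)+1\le \max\{R(C_4^m,G_1,\ldots,G'_i,\ldots,G_n),\ 5,\ \max_\ell |V(G_\ell)|+1\}.$$
The non-degeneracy hypothesis, together with the ``$m=1\Rightarrow$ some $G_i\ne K_2$'' clause, ensures that the maximum is attained by the first term and is bounded by $r_i$. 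Hence $r_i^{\mathrm{l2}}:=r_i-1$ qualifies as input to Lemma~\ref{l2}.

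Feeding these $r_i^{\mathrm{l2}}$'s into Lemma~\ref{l2}, and noting that $\sum_i r_i^{\mathrm{l2}}=\sum_i r_i-n$, produces
$$R(P_3,C_4^{m-1},G_1,\ldots,G_n)\le \sum r_i-2n+3+\frac{m(m-1)}{2}+\left\lfloor (m-1)\sqrt{\frac{m^2}{4}+\sum r_i-2n+1}\right\rfloor.$$
Setting $A:=\sum r_i-n$ and dropping the floor and ceiling (which only helps in the desired direction), the remaining task is to verify the continuous inequality
$$n+m-3+m\sqrt{\frac{(m+1)^2}{4}+A}\ \ge\ (m-1)\sqrt{\frac{m^2}{4}+A-n+1}.$$
Whenever $2m+4n\ge 3$, i.e.\ $(m,n)\ne(1,0)$ --- the excluded case being precisely the one ruled out by the $m=1$ hypothesis --- one has $(m+1)^2/4+A\ge m^2/4+A-n+1$, so the right-hand side is at most $(m-1)\sqrt{(m+1)^2/4+A}$. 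The surplus $m\sqrt{(m+1)^2/4+A}-(m-1)\sqrt{(m+1)^2/4+A}=\sqrt{(m+1)^2/4+A}\ge (m+1)/2$, combined with $n+m-3\ge -1$, then closes the inequality.

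The main obstacles are twofold. First, the algebraic comparison in the last step involves two square roots with different radicands and requires care with the floor/ceiling bookkeeping; the role of the edge-case hypothesis ($m=1\Rightarrow$ some $G_i\ne K_2$) is precisely to rule out the one small case $(m,n)=(1,0)$ where the inequality would fail. Second, the Lemma~\ref{l1} reduction in Step~1 demands a careful check that the non-degeneracy hypothesis forces the Ramsey-number term to dominate the max, rather than the terms $5$ or $|V(G_\ell)|+1$.
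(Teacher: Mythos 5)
There is a genuine gap in your Step~1. You claim that the hypotheses ensure that the maximum in Lemma~\ref{l1}, applied to the list $(C_4,\ldots,C_4,G_1,\ldots,G'_i,\ldots,G_n)$ with $m-1$ copies of $C_4$, is attained by the Ramsey term, so that $R(P_3,C_4,\ldots,C_4,G_1,\ldots,G'_i,\ldots,G_n)+1\le r_i$ and hence $r_i-1$ is an admissible input to Lemma~\ref{l2}. But the non-degeneracy hypothesis of the lemma reads $R(C_4,\ldots,C_4,G_1,\ldots,G_n)>\max_\ell|V(G_\ell)|$: it concerns the list containing $G_i$ itself, not the list in which $G'_i$ has been substituted, and after deleting $w_i$ the Ramsey number can collapse far below $\max_\ell|V(G_\ell)|+1$. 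Concretely, take $m=1$, $n=2$, $G_1=K_{1,3}$, $G_2=K_{10}$, and let $w_1$ be the center of the star, so $G'_1=3K_1$. All hypotheses of Lemma~\ref{P3} hold (one checks easily that $R(C_4,K_{1,3},K_{10})\ge 11$), yet $R(C_4,3K_1,K_{10})=3$, since every coloring of $K_3$ vacuously contains $3K_1$ in the second color; so one may take $r_1=3$. But also $R(P_3,3K_1,K_{10})=3$, so $r_1-1=2$ fails the requirement of Lemma~\ref{l2}. In Lemma~\ref{l1}'s maximum the dominant term here is $|V(K_{10})|+1=11$, not the Ramsey term, and the ``$+1$'' gain you want from Lemma~\ref{l1} is simply not available. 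Since everything downstream rests on feeding $r_i-1$ into Lemma~\ref{l2}, the proof does not go through as written.

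The paper's route avoids this entirely by forgoing the decrement: since $P_3$ is a subgraph of $C_4$, monotonicity gives $R(P_3,C_4,\ldots,C_4,G_1,\ldots,G'_i,\ldots,G_n)\le R(C_4,\ldots,C_4,G_1,\ldots,G'_i,\ldots,G_n)\le r_i$, so the given $r_i$'s themselves are valid inputs to Lemma~\ref{l2}; the lemma then reduces to the purely arithmetic statement that the right-hand side of (3) is at least the right-hand side of (1) \emph{evaluated at the same $r_i$'s}. That comparison is tighter than the one you verified (yours compared against (1) at $r_i-1$), and the difference matters at the edge: for $m=1$ you can no longer afford to discard the ceiling, since the continuous inequality fails when $\sum_i r_i-n$ is small; instead one needs $\left\lceil\sqrt{1+\sum_i r_i-n}\right\rceil\ge 2$, i.e.\ $\sum_i r_i-n\ge 1$, and this is exactly where the hypothesis that some $G_{i_0}\ne K_2$ is used (it forces $r_{i_0}\ge 2$) --- not merely, as in your account, to exclude the pair $(m,n)=(1,0)$. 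If you repair Step~1 by using the undecremented $r_i$'s, your Step~3 algebra survives for $m\ge 2$ (the slack $m-3+\sqrt{(m+1)^2/4+A}$ is still positive there), but the $m=1$ case must be handled with the integrality argument just described.
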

\begin{proof}
Let $RHS(1)$ denote the right-hand side of inequality (1) in Lemma \ref{l2}, and let $RHS(3)$ denote the right-hand side of inequality (3).
In order to prove this lemma, by Lemma \ref{l2}, it suffices to show that
$RHS(3) \ge RHS(1)$. In the proof below, among other steps, we will use an easy observation that for any positive integer $k$, it is true that $\left\lceil\sqrt{k+1}\right\rceil = \left\lfloor\sqrt{k}\right\rfloor+1$.

If $m\ge 2$ then $RHS(3)=$
$$\sum_{i=1}^n r_i-n+\frac{m^2+m}{2}+1+\left\lfloor \sqrt{\frac{(m^2+m)^2}{4}+m^2\left(\sum_{i=1}^n r_i-n\right)-1}\right\rfloor=$$
$$\sum_{i=1}^n r_i-n+1+m+\frac{m^2-m}{2}+\left\lfloor \sqrt{\frac{(m^2-m)^2}{4}+m^3+m^2\left(\sum_{i=1}^n r_i-n\right)-1}\right\rfloor=$$
$$\sum_{i=1}^n r_i-n+1+m+\frac{m^2-m}{2}+\left\lfloor \sqrt{\frac{(m^2-m)^2}{4}+m^3+(m-1)^2\left(\sum_{i=1}^n r_i-n+1\right)-(m-1)^2-1}\right\rfloor$$

$\ge RHS(1).$

\bigskip
If $m=1$, let $i_0$ be such that $G_{i_0}\ne K_2$, so that $r_{i_0}\ge 2$ and $\sum_{i=1}^n r_i-n\ge 1$. Then
$$RHS(3)=\sum_{i=1}^n r_i-n+1+\left\lceil \sqrt{1+\sum_{i=1}^n r_i-n}\right\rceil\ge \sum_{i=1}^n r_i-n+3=RHS(1),$$

where in the latter the $RHS$'s were simplified using $m=1$.
\end{proof}

Now, we are ready to present our main result:

\begin{theorem} \label{MT}
Let $m\ge 1$ and $n\ge 0$. Consider $n$ graphs, $G_1,\ldots,G_n$. For each color $i$ with $1\leq i\leq n$, let $G'_i=G_i-w_i$, where $w_i\in V(G_i)$, and let $r_i$'s be integers such that
$$r_i\geq R(\underbrace{C_4,\ldots,C_4}_m,G_1,\ldots,G_{i-1},G'_i,G_{i+1},\ldots,G_n).$$
Assume further that
$R=R(\underbrace{C_4,\ldots,C_4}_m,G_1,\ldots,G_n)>\max_{1\le i\le n}\{|V(G_i)|\}$, and also
that when $m=1$ then $G_i\ne K_2$ for some $i\in\{1,\ldots,n\}$.
Then, we have
$$R \leq \sum_{i=1}^n r_i-n+1+\frac{m^2+m}{2}+\left\lceil m\sqrt{\frac{\left(m+1\right)^2}{4}+\sum_{i=1}^n r_i-n}\right\rceil.$$
\end{theorem}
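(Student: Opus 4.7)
The plan is to adapt the proof of Lemma~\ref{l2} to the setting of Theorem~\ref{MT}, replacing the $P_3$-avoidance of Lemma~\ref{l2}'s first color with $C_4$-avoidance, so that all $m$ initial colors are $C_4$-free. Set $R=R(\underbrace{C_4,\ldots,C_4}_m,G_1,\ldots,G_n)$, $N:=R-1$, and fix a critical $(C_4^m,G_1,\ldots,G_n)$-coloring of $K_N$. Select $v_0\in V(K_N)$ minimizing $S_{v_0}:=\sum_{i=1}^m d_i(v_0)$. Standard Ramsey degree bounds yield $d_{m+i}(v_0)\le r_i-1$, so
\[
N-1 \;=\; S_{v_0}+\sum_{i=1}^n d_{m+i}(v_0) \;\le\; S_{v_0}+\sum_{i=1}^n r_i-n. \qquad(\ast)
\]

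For each $i\in\{1,\ldots,m\}$, $C_4$-freeness of color~$i$ gives $\sum_v\binom{d_i(v)}{2}\le\binom{N}{2}$. Summing over~$i$, applying Corollary~\ref{se} at each vertex to bound $\sum_{i=1}^m d_i(v)^2\ge S_v^2/m$, and using the minimality of $v_0$ (so that $\sum_v(S_v^2/m-S_v)\ge N(S_{v_0}^2/m-S_{v_0})$, justified by the monotonicity of $S\mapsto S^2/m-S$ on $[m/2,\infty)$), we obtain $S_{v_0}^2/m-S_{v_0}\le m(N-1)$. Substituting $(\ast)$ yields the quadratic
\[
S_{v_0}^2 - m(m+1)\,S_{v_0} - m^2\!\left(\textstyle\sum_{i=1}^n r_i-n\right)\;\le\;0,
\]
whose solution is $S_{v_0}\le \tfrac{m(m+1)}{2}+m\sqrt{\tfrac{(m+1)^2}{4}+\sum_{i=1}^n r_i-n}$. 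Combining with $R=N+1\le S_{v_0}+\sum r_i-n+2$, using the integrality of $R$ and $\tfrac{m(m+1)}{2}$ to pass to the floor of the square-root term, and then applying the identity $\lceil x\rceil=\lfloor x\rfloor+1$ for non-integer $x$, recovers Theorem~\ref{MT}'s bound in the generic case.

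The main obstacle will be the corner case where $m\sqrt{(m+1)^2/4+\sum_{i=1}^n r_i-n}$ is itself an integer: the direct calculation then exceeds the claimed bound by one. To close this gap, equality throughout the chain of inequalities would force each of the first $m$ color classes to be simultaneously a friendship graph (by tightness of $\sum_v\binom{d_i(v)}{2}=\binom{N}{2}$, which combined with $C_4$-freeness forces every pair of vertices to have exactly one common color-$i$ neighbor) and regular (by Cauchy--Schwarz tightness together with constancy of $S_v$). The Friendship theorem then identifies each such graph with a windmill, which is regular only for $N=3$; hence for non-trivial $N$ strict inequality must hold somewhere, saving the final unit. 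The hypotheses $R>\max_j|V(G_j)|$ and $G_i\neq K_2$ for some $i$ when $m=1$ are precisely what is needed to rule out the degenerate small-$N$ cases.
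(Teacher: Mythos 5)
Your proposal is correct in substance, and it resolves the one genuinely delicate point of the theorem by a different route than the paper, even though the analytic core is identical in both: the minimum-degree vertex $v_0$, the per-color count $\sum_v\binom{d_i(v)}{2}\le\binom{N}{2}$, Corollary \ref{se}, the quadratic in $S_{v_0}=\sum_{i=1}^m d_i(v_0)$, and the identity $\lceil\sqrt{k}\rceil=\lfloor\sqrt{k-1}\rfloor+1$. The difference is how the last unit is saved. The paper splits into cases up front: if some color $i\le m$ attains $\sum_v\binom{d_i(v)}{2}=\binom{N}{2}$, the Friendship Theorem yields a vertex universal in that color, whose deletion leaves a coloring with a $P_3$-free color; the auxiliary machinery of Lemmas \ref{l1}, \ref{l2} and \ref{P3} (a parallel quadratic argument for such colorings, and the place where the hypothesis $R>\max_i|V(G_i)|$ is actually used) then gives a bound smaller by one, compensating for the deleted vertex; otherwise all counts are strict and integrality of $S_{v_0}$ supplies the $-1$ under the radical. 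You instead run a single chain of weak inequalities and observe that overshoot by one can occur only when $m\sqrt{(m+1)^2/4+\sum_i r_i-n}$ is an integer and every inequality is tight; tightness forces each of the first $m$ color classes to be a friendship graph \emph{and} regular, hence a regular windmill, so $N=3$; for $m\ge 2$ this is already impossible (two colors cannot both be $K_3$ on the same three vertices, or equivalently the windmill center would need degree $N-1$ in every color), and for $m=1$ the forced configuration has all $r_i=1$ and all colors beyond the first empty, whence every $G_i=K_2$, contradicting the hypothesis. I verified that this equality analysis goes through (in the tight case $S_{v_0}\ge m$, so the monotonicity of $x\mapsto x^2/m-x$ needed for constancy of $S_v$ is available -- a caveat the paper's own chain shares), so your route buys real economy: Lemmas \ref{l1}, \ref{l2}, \ref{P3} exist in the paper only to serve this proof, you dispense with all three, and you appear to need the hypothesis $R>\max_i|V(G_i)|$ only marginally if at all. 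What it costs is that the degenerate bookkeeping you wave at (windmill degrees, the $N\le 3$ cases, and the deduction that all $G_i=K_2$) must be written out explicitly, whereas the paper's deletion argument sidesteps that analysis at the price of three extra lemmas.
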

\begin{proof}
Set $N=R-1$, and let $G$ be a $(\underbrace{C_4,\ldots,C_4}_m,G_1,\ldots,G_n)$-coloring of the edges of $K_N$. Let $v_0\in V(G)$ such that
$\sum_{i=1}^md_i(v_0)=\min_{v\in V(G)}\{\sum_{i=1}^m d_i(v)\}$. For $1\leq i\leq n$, in order to avoid $G_i$ of color $i+m$, we must have $d_{i+m}(v_0)\leq r_i-1$. Hence, we also have $$N=1+\sum_{i=1}^{m+n} d_i(v_0)\leq 1-n+\sum_{i=1}^{m} d_i(v_0)+\sum_{i=1}^n r_i.\mbox{\hspace{3cm}(4)}$$

For each $i\in\{1,\ldots,m\}$, the number of $P_3$'s in color $i$ cannot exceed $\left(\begin{array}{c}N\\2\end{array}\right)$, since otherwise they would force a $C_4$ in color $i$. Thus,
$\sum_{v\in V(G)}\left(\begin{array}{c}d_i(v)\\2\end{array}\right)\le\left(\begin{array}{c}N\\2\end{array}\right)$.
If $\sum_{v\in V(G)}\left(\begin{array}{c}d_1(v)\\2\end{array}\right)=\left(\begin{array}{c}N\\2\end{array}\right)$, then by the Friendship Theorem \cite{ERF}, which states that in
any graph in which any two vertices have precisely one common neighbor, then there is a vertex which is adjacent to all other vertices.
In that case, let $u$ be the vertex adjacent to all the others with edges of the first color. $G-u$ is a $(P_3,\underbrace{C_4,\ldots,C_4}_{m-1},G_1,\ldots,G_n)$-coloring of $K_{N-1}$, so $R-2=N-1\geq R(P_3,\underbrace{C_4,\ldots,C_4}_{m-1},G_1,\ldots,G_n)-1$, and by Lemma \ref{P3}, the result follows.

Similarly, the same argument applies if $\sum_{v\in V(G)}\left(\begin{array}{c}d_i(v)\\2\end{array}\right)=\left(\begin{array}{c}N\\2\end{array}\right)$ for some $i\leq m$. Therefore, we can assume that $\sum_{v\in V(G)}d_i(v)(d_i(v)-1)< N(N-1)$ for all $i$ and
$$\sum_{v\in V(G)}\left(\sum_{i=1}^m d_i(v)^2-\sum_{i=1}^m d_i(v)\right)=\sum_{i=1}^m\sum_{v\in V(G)}d_i(v)(d_i(v)-1)< mN(N-1).$$

Then, by Corollary \ref{se}, for any $v\in V(G)$ we have $m\sum_{i=1}^m d_i(v)^2\ge(\sum_{i=1}^m d_i(v))^2$, and further
$$mN(N-1)>\sum_{v\in V(G)}\left(\sum_{i=1}^m d_i(v)^2-\sum_{i=1}^m d_i(v)\right)\ge$$
$$\sum_{v\in V(G)}\left(\dfrac{(\sum_{i=1}^m d_i(v))^2}{m}-\sum_{i=1}^m d_i(v)\right)=\sum_{v\in V(G)}\left(\sum_{i=1}^m d_i(v)\right)\left(\dfrac{\sum_{i=1}^m d_i(v)}{m}-1\right)\ge$$
$$N\sum_{i=1}^m d_i(v_0)\left(\dfrac{\sum_{i=1}^m d_i(v_0)}{m}-1\right)=N\left(\dfrac{(\sum_{i=1}^m d_i(v_0))^2}{m}-\sum_{i=1}^m d_i(v_0)\right).$$
Therefore, by (4), we see that $$\dfrac{(\sum_{i=1}^m d_i(v_0))^2}{m}-\sum_{i=1}^m d_i(v_0)< m(N-1)\leq m\left(-n+\sum_{i=1}^{m} d_i(v_0)+\sum_{i=1}^n r_i\right)$$
and
$$(\sum_{i=1}^m d_i(v_0))^2-m(m+1)\sum_{i=1}^m d_i(v_0)-m^2\left(-n+\sum_{i=1}^n r_i\right) < 0,$$
and hence
$$\sum_{i=1}^m d_i(v_0)\leq \dfrac{m^2+m}{2}+\sqrt{\dfrac{\left(m^2+m\right)^2}{4}+m^2\left(\sum_{i=1}^n r_i-n\right)-1}.$$

Consequently, by (4), $$R=N+1\leq 2+\sum_{i=1}^n r_i-n+\dfrac{m^2+m}{2}+\sqrt{\dfrac{\left(m^2+m\right)^2}{4}+m^2\left(\sum_{i=1}^n r_i-n\right)-1}.$$

Since $R$ is an integer, we have
$$R\le 2+\sum_{i=1}^n r_i-n+\dfrac{m^2+m}{2}+\left\lfloor\sqrt{\dfrac{\left(m^2+m\right)^2}{4}+m^2\left(\sum_{i=1}^n r_i-n\right)-1}\right\rfloor$$
$$=\sum_{i=1}^n r_i-n+1+\frac{m^2+m}{2}+\left\lceil m\sqrt{\frac{\left(m+1\right)^2}{4}+\sum_{i=1}^n r_i-n}\right\rceil,$$
and the result follows.
\end{proof}

Note that if $m\ge 2$ and $n=0$, then the bound in Theorem \ref{MT} coincides with the known result $R(\underbrace{C_4,\ldots,C_4}_m)\le m^2+m+1$ \cite{Chu2,Ir}.

\section{Complete graphs}

In this section, we focus attention on concrete upper bounds
for the Ramsey numbers of the form
$R(\underbrace{C_4,\ldots,C_4}_{m},G_1,\ldots,G_n)$,
where all $G_i$'s are complete graphs, for $1 \le i \le n$.
We gather our results in Table \ref{table}, in which
the new upper bounds are shown in the last column.
Proposition \ref{pro29} below provides the upper
bound in Case {\#}3, while all other cases are derived in the proof of Theorem \ref{cases}.

\bigskip
\begin{table}[h]
\begin{center}
\begin{tabular}{|c|l|c|l|l|c|}
\hline
Case &
\multirow{2}{*}{Ramsey number} & \multirow{2}{*}{$m,n,\sum_{i=1}^n r_i$} &
lower & old upper & new upper \\
\#&& & bound & bound & bound\\ \hline
1& $R(C_4,K_{11})$ & $1,1,36$ & 40 \cite{VO} & 44\ \ \cite{LaLR} & 43 \\
2& $R(C_4,K_{12})$ & $1,1,43$ & 43 (*) & 52\ \ \cite{LaLR} & 51 \\
3& $R(C_4,K_3,K_4)$ & Proposition \ref{pro29} & 27\ \cite{DyDz1} & 32\ \ \cite{XSR1} & 29 \\
4& $R(C_4,K_4,K_4)$ & $1,2,58$ & 52 \cite{XSR1} & 71\ \ \cite{LidP} & 66 \\
5& $R(C_4,K_3,K_3,K_3)$ & $1,3,51$ & 49 \cite{Bev} & 59\ \ \cite{LidP} & 57 \\
6& $R(C_4,C_4,K_3,K_4)$ & $2,2,57$ & 43 \cite{DyDz1} & 76 \ \cite{XSR1} & 75 \\
7& $R(C_4,C_4,K_4,K_4)$ & $2,2,150$ & 87 \cite{XSR1} & 179 \cite{XSR1} & 177 \\ \hline
\end{tabular}
\caption{New bounds on Ramsey numbers of $C_4$ versus complete graphs described in Section 3:
parameters, lower bounds and old and new upper bounds. (*) Lower bound 43 in case {\#}2 is easily obtained by adding vertex-disjoint $K_3$ to the lower bound witness graph in case {\#1}.
In all cases, except case {\#}3,
the new upper bound is obtained by using Theorem \ref{MT}.}
\label{table}
\end{center}
\end{table}

\begin{proposition} \label{pro29}
$R(C_4, K_3, K_4) \le 29$.
\end{proposition}
\begin{proof}
First, we note that $R(K_3,K_4)=9$ \cite{GG} and $R(C_4,K_9)=30$ \cite{LaLR}.
Hence, if there exists any $(C_4, K_3, K_4)$-coloring $G$ of $K_{29}$,
then by merging the last two colors of $G$ we obtain a $(C_4, K_9)$-coloring,
i.e., a $C_4$-free graph $G'$ on 29 vertices with maximum independent
set of order at most 8. All such graphs were obtained in \cite{LaLR},
and up to isomorphism there are 267 of them.

We verified by computations
that for every such graph (one of 267 possible graphs), its non-edges
cannot be partitioned into a $K_3$-free graph and a $K_4$-free graph.
Thus, no $(C_4, K_3, K_4)$-coloring of $K_{29}$ exists, and
the bound in the proposition holds.
\end{proof}

\begin{theorem} \label{cases}
The upper bounds in the last column of Table \ref{table} hold.
\end{theorem}

\begin{proof}
Proposition \ref{pro29} proves the bound in Case {\#3}.
The upper bounds in
all other cases are obtained by applying Theorem \ref{MT} with
some additional simple steps, as described below.
\begin{itemize}
\item[\bf \#1.]
It is known that $R(C_4,K_{10})=36$ \cite{LaLR}.
Theorem \ref{MT} with $m=n=1$, $G_1=K_{11}$ and $r_1=36$ gives
$R(C_4,K_{11})\le 36+1+\sqrt{36}=43$.

\item[\bf \#2.]
Let $r_1=43$, so by Case \#1 we have $r_1 \ge R(C_4,K_{11})$.
With $m=n=1$ and $G_1=K_{12}$, we obtain
$R(C_4,K_{12}) \le r_1+1+\left\lceil\sqrt{r_1}\right\rceil=44+7=51$.

\item[\bf \#4.]
Let $r_1=r_2=29$, so by Case \#3 we have $r_1,r_2 \ge R(C_4,K_3,K_4)$.
With $m=1$, $n=2$ and $G_1=G_2=K_4$, we obtain
$$R(C_4,K_4,K_4) \le r_1+r_2+\left\lceil\sqrt{r_1+r_2-1}\right\rceil=
58+\left\lceil\sqrt{57}\right\rceil=66.$$

\item[\bf \#5.]
It is known that $R(C_4,K_3,K_3)=17$ \cite{ExRe}.
Let $r_i=17$ and $G_i=K_3$, for $1 \le i \le 3$.
With $m=1$ and $n=3$, we have
$$R(C_4,K_3,K_3,K_3)\le r_1+r_2+r_3-1+\left\lceil\sqrt{r_1+r_2+r_3-2}\right\rceil=
50+\sqrt{49}=57.$$

\item[\bf \#6.]
It is known that $R(C_4,C_4,K_4)\le 21$ \cite{LidP} and
$R(C_4,C_4,K_3,K_3)\le 36$ \cite{XuR2}. Let $r_1=21$ and $r_2=36$,
so that $r_1 \ge R(C_4,C_4,K_4)$ and $r_2 \ge R(C_4,C_4,K_3,K_3)$.
With $m=2$, $n=2$, $G_1=K_3$ and $G_2=K_4$, we have
$$R(C_4,C_4,K_3,K_4)\le r_1+r_2+2
+\left\lceil 2\sqrt{9/4+r_1+r_2-2}\right\rceil =
59+\left\lceil\sqrt{9+220}\right\rceil=75.$$

\item[\bf \#7.]
Let $r_1=r_2=75$, so by Case \#6 we have $r_1,r_2 \ge R(C_4,C_4,K_3,K_4)$.
With $m=2$, $n=2$ and $G_1=G_2=K_4$, we have
$$R(C_4,C_4,K_4,K_4)\le r_1+r_2+2
+\left\lceil2\sqrt{9/4+r_1+r_2-2}\right\rceil =
152+\left\lceil\sqrt{1+600}\right\rceil=177.$$
\end{itemize}
\end{proof}

\section{Stars and Books}

We start this section with a classical result obtained by Parsons in 1975.
\begin{lemma} \label{Par}
\cite{Par3}
For $k\ge 2$, we have $R(C_4,K_{1,k})\le k+\left\lceil\sqrt{k}\right\rceil+1$.
\end{lemma}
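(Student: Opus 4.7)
I would proceed by contradiction: suppose a $(C_4,K_{1,k})$-coloring of $K_N$ exists with $N=k+s+1$, where $s:=\lceil\sqrt{k}\rceil$. Since the color-2 graph contains no $K_{1,k}$, every vertex has color-2 degree at most $k-1$, so its color-1 degree satisfies $d_1(v)\ge N-1-(k-1)=s+1$.

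The color-1 graph is $C_4$-free, so any two vertices share at most one color-1 common neighbor. Counting the pairs of color-1 neighbors at each vertex therefore gives
$$\sum_{v}\binom{d_1(v)}{2}\le\binom{N}{2},$$
with equality exactly when every pair of vertices has precisely one color-1 common neighbor. Since $d_1(v)\ge s+1$ for every $v$, the left side is at least $N\binom{s+1}{2}$, and the inequality simplifies to $s(s+1)\le N-1=k+s$, i.e.\ $s^2\le k$. But $s=\lceil\sqrt{k}\rceil$ forces $s^2\ge k$, so $s^2=k$ and every inequality above is in fact an equality.

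Equality yields two structural conclusions about the color-1 graph: it is $(s+1)$-regular, and every pair of vertices has exactly one color-1 common neighbor. The second condition is the hypothesis of the Friendship Theorem \cite{ERF}, whose conclusion is that the graph must be a windmill of triangles meeting at a common vertex; such a graph is never regular of common degree at least $3$. Since $k\ge 2$ forces $s\ge 2$ and hence $s+1\ge 3$, this contradiction finishes the argument.

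The main obstacle is this boundary ``perfect-square'' case: the pure counting argument alone only delivers $s^2\le k$, and without an extra structural input one cannot exclude an extremal regular configuration realising equality. Invoking the Friendship Theorem, which is already used in the proof of Theorem \ref{MT}, is the natural and cleanest way to dispose of it.
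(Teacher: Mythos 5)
Your argument is correct, but it takes a different route from the paper: the paper proves Lemma \ref{Par} in one line, by invoking Theorem \ref{MT} with $m=n=1$ and $r_1=R(C_4,kK_1)=k$ (and notes that the original proof is Parsons' \cite{Par3}). What you have written is essentially that original direct argument, reconstructed from scratch. The two key ingredients you use --- bounding $\sum_v\binom{d_1(v)}{2}$ by $\binom{N}{2}$ in a $C_4$-free graph, and disposing of the equality case via the Friendship Theorem \cite{ERF} --- are exactly the engine inside the proof of Theorem \ref{MT}, so the mathematics is the same at heart; but your specialization to $m=n=1$ is cleaner and makes transparent where the $\lceil\sqrt{k}\rceil$ comes from, namely that the pure counting only yields $s^2\le k$ and the perfect-square boundary case genuinely requires the structural input. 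One point where the treatments of that boundary case diverge: inside Theorem \ref{MT} the Friendship configuration is handled by deleting the universal vertex and reducing to the $(P_3,\ldots)$ bound of Lemma \ref{P3}, whereas you kill it more directly by observing that a windmill is never regular of degree at least $3$, which is valid since $k\ge 2$ forces $s+1\ge 3$. What the paper's approach buys is generality (the same theorem covers all the multicolor cases in Sections 3 and 4); what yours buys is a short, self-contained proof of this particular lemma. I see no gap in your reasoning.
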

\begin{proof}
The original proof was presented by Parsons, but we note that the same result
is implied by our Theorem \ref{MT} using $m=n=1$ and $r_1=R(C_4,kK_1)=k$.
\end{proof}

\medskip
The next corollary puts together Theorem \ref{MT} and Lemma \ref{Par}.
\begin{corollary} \label{book}
For $k\ge 2$, we have $$R(C_4,B_k)\le R(C_4,K_{1,k})+\left\lceil\sqrt{R(C_4,K_{1,k})}\right\rceil+1\le k+\left\lceil \sqrt{k+\left\lceil\sqrt{k}\right\rceil+1}\right\rceil +\left\lceil \sqrt{k}\right\rceil+2.$$
\end{corollary}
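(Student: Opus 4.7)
The plan is to derive the first inequality as a direct application of Theorem~\ref{MT} with $m=n=1$, and then obtain the second inequality by substituting the bound of Lemma~\ref{Par}.

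Concretely, I would set $G_1 = B_k$ and let $w_1$ be one of the two vertices of the $K_2$ part of $B_k = K_2 + kK_1$, so that $G_1' = B_k - w_1 = K_{1,k}$, and take $r_1 = R(C_4, K_{1,k})$. With $m=n=1$ the constants in Theorem~\ref{MT} simplify: $\frac{m^2+m}{2}=1$, $\frac{(m+1)^2}{4}=1$, and the expression under the radical becomes $1 + (r_1 - 1) = r_1$. The bound thus collapses to
$$R(C_4, B_k) \leq r_1 + 1 + \lceil \sqrt{r_1}\,\rceil,$$
which is exactly the first inequality of the corollary.

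Before invoking Theorem~\ref{MT} I would verify its two side hypotheses. Since $k \geq 2$, $B_k \neq K_2$, so the condition ``$G_i \neq K_2$ for some $i$'' holds. The condition $R(C_4, B_k) > |V(B_k)| = k+2$ can be dispatched by a brief case analysis: if it fails, then $R(C_4, B_k) \leq k+2$, while the claimed right-hand side is at least $(k+1) + 1 + 1 = k+3$ using the trivial lower bound $R(C_4, K_{1,k}) \geq k+1$, so the inequality is automatic. In the remaining case, Theorem~\ref{MT} applies and gives the bound displayed above.

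For the second inequality I would substitute the Parsons bound $R(C_4, K_{1,k}) \leq k + \lceil \sqrt{k}\,\rceil + 1$ from Lemma~\ref{Par} into the first inequality; by monotonicity of $\lceil \sqrt{\cdot}\,\rceil$ this yields the stated right-hand side after collecting terms. I do not expect any real obstacle: the argument is a direct specialization of Theorem~\ref{MT} followed by one substitution, and the only minor subtlety is the case analysis used to handle the applicability hypothesis of Theorem~\ref{MT}.
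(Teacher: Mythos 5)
Your proposal is correct and follows essentially the same route as the paper: apply Theorem~\ref{MT} with $m=n=1$ and the decomposition $B_k = K_1 + K_{1,k}$ (i.e., deleting one vertex of the $K_2$ leaves $K_{1,k}$), then substitute Lemma~\ref{Par} for the second inequality. Your extra case analysis verifying the hypothesis $R(C_4,B_k) > |V(B_k)|$ is a careful touch the paper leaves implicit, but it does not change the argument.
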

\begin{proof}
Since $B_k=K_1+K_{1,k}$, using Theorem \ref{MT} with $m=n=1$ and $r_1=k+\left\lceil\sqrt{k}\right\rceil+1$ gives the first inequality. The second inequality is obtained by Lemma \ref{Par}.
\end{proof}

Note that for $k=q^2-q+1$ we have $\left\lceil\sqrt{k}\right\rceil=q$. Our
result in Corollary \ref{book}, which holds for all integers $k \ge 2$, generalizes
a result by Faudree, Rousseau and Sheehan \cite{FRS7}. In particular, the Lemma in
section 2 of \cite{FRS7} implies that $R(C_4,B_{17})\le 29$, while our Corollary \ref{book}
using Parson's \cite{Par3} result $R(C_4,K_{1,17})=22$ gives a better bound, namely
$R(C_4,B_{17})\le 28$.

Our last corollary about multicolor Ramsey numbers of $C_4$'s versus stars is also a consequence of Theorem \ref{MT}.
\begin{corollary} \label{c4stars}
Let $m,n,k_1,\ldots,k_n\ge 1$, such that $m+\sum_{i=1}^n k_i\ge n+2$. Then
$$R(\underbrace{C_4,\ldots,C_4}_m,K_{1,k_1},\ldots,K_{1,k_n})\le 1+\sum_{i=1}^nk_i-n+\dfrac{m^2+m}{2}+\left\lceil m\sqrt{\dfrac{(m+1)^2}{4}+\sum_{i=1}^n k_i-n}\right\rceil.$$
\end{corollary}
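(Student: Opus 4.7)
The plan is to apply Theorem \ref{MT} directly with $G_i = K_{1, k_i}$ for $1 \le i \le n$, choosing $w_i$ to be the center vertex of the star so that $G'_i = K_{1, k_i} - w_i = k_i K_1$ is the edgeless graph on $k_i$ vertices. This is the natural choice because removing the center collapses the star into the graph with the smallest possible Ramsey number.

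Next, I would argue that $r_i = k_i$ is a valid choice in Theorem \ref{MT}. Since $k_i K_1$ has no edges, any complete graph on at least $k_i$ vertices trivially contains a monochromatic copy of $k_i K_1$ in color $m+i$, so
$$R(\underbrace{C_4,\ldots,C_4}_m, K_{1,k_1}, \ldots, K_{1,k_{i-1}}, k_i K_1, K_{1,k_{i+1}}, \ldots, K_{1,k_n}) \le k_i.$$
Substituting $r_i = k_i$ into the bound of Theorem \ref{MT} then produces exactly the right-hand side of the corollary.

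Finally, I would verify the auxiliary hypotheses of Theorem \ref{MT}. When $m = 1$, the assumption $m + \sum_i k_i \ge n + 2$ gives $\sum_i (k_i - 1) \ge 1$, so some $k_{i_0} \ge 2$ and thus $G_{i_0} = K_{1, k_{i_0}} \ne K_2$, as required. The remaining condition $R > \max_i |V(G_i)| = \max_i (k_i + 1)$ I would treat by a short case split. If it holds, Theorem \ref{MT} applies and the corollary follows immediately. Otherwise $R \le \max_i (k_i + 1)$, and the estimate $1 + \sum_i k_i - n = 1 + \sum_i (k_i - 1) \ge \max_i k_i$, combined with the $\frac{m^2 + m}{2} \ge 1$ term, shows that the corollary's right-hand side is already at least $\max_i (k_i + 1)$, so the bound holds trivially. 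This small-$R$ bookkeeping is the only real obstacle; aside from it, the proof is a direct specialization of Theorem \ref{MT}.
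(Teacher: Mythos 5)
Your proof is correct and follows essentially the same route as the paper's: a direct application of Theorem \ref{MT} with $w_i$ taken to be the center of $K_{1,k_i}$, so that $G'_i=k_iK_1$ and $r_i=k_i$. You are in fact more careful than the paper's one-line argument, since you explicitly verify the $m=1$ hypothesis and dispose of the degenerate case $R\le\max_i(k_i+1)$, both of which the paper leaves implicit.
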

\begin{proof}
Apply Theorem \ref{MT} with $$r_i=R(\underbrace{C_4,\ldots,C_4}_m,K_{1,k_1},\ldots,K_{1,k_{i-1}},k_iK_1,K_{1,k_{i+1}},\ldots,K_{1,k_n}),$$

\noindent
and observe that $r_i=k_i$. The result follows.
\end{proof}

\medskip
We note that if we consider Corollary \ref{c4stars} with $n=1$, then it reduces to a result
obtained in \cite{ZhaCC4}, which states that for $k,m\ge 1$, with $m+k>3$, it holds that
$$R(\underbrace{C_4,\ldots,C_4}_m,K_{1,k})\le k+\dfrac{m^2+m}{2}+\left\lceil m\sqrt{k+\dfrac{m^2+2m-3}{4}}\right\rceil.$$

\end{document}